\newtheorem{theorem}{Theorem}[section]
\newtheorem{lemma}{Lemma}[section]
\newtheorem{definition}{Definition}[section]
\DeclarePairedDelimiter\ceil{\lceil}{\rceil}
\begin{document}

\begin{center}
{\LARGE \bf Solving the $\bm{106}$ years old $\bm{3^k}$ points problem with the clockwise-algorithm}
\vspace{8mm}

{\Large \bf Marco Rip\`a}
\vspace{3mm}

World Intelligence Network \\ 
Rome, Italy \\
e-mail: \url{marcokrt1984@yahoo.it}
\vspace{2mm}
\end{center}

\noindent {\bf Abstract:} \sloppy In this paper, we present the clockwise-algorithm that solves the extension in $k$-dimensions of the infamous nine-dot problem, the well-known two-dimensional thinking outside the box puzzle. We describe a general strategy that constructively produces minimum length covering trails, for any $k \in \mathbb{N}-\{0\}$, solving the NP-complete $(3 \times 3 \times \cdots \times 3)$-point problem inside $3 \times 3 \times \cdots \times 3$ hypercubes. In particular, using our algorithm, we explicitly draw different covering trails of minimal length $h(k)=\frac{3^k-1}{2}$, for $k=3, 4, 5$. Furthermore, we conjecture that, for every $k \geq 1$, it is possible to solve the $3^k$-point problem with $h(k)$ lines starting from any of the $3^k$ nodes, except from the central one. Finally, we cover a $3 \times 3 \times 3$ grid with a tree of size $12$.\\
{\bf Keywords:} Nine dots puzzle, Thinking outside the box, Polygonal chain, Optimization problem, Clockwise-algorithm.
\\
{\bf 2020 Mathematics Subject Classification:} Primary 05C85; Secondary 05C57, 68R10.
\vspace{5mm}


\section{Introduction} \label{sec:Intr}
The classic \textit{nine-dot puzzle} \cite{1, 2} is the well-known thinking outside the box challenge \cite{3, 4}, and it corresponds to the two-dimensional case of the general $3^k$-point problem (assuming $k=2$) \cite{5, 6, 7, 8}.

The statement of the $3^k$-point problem is as follows:

\noindent ``Given a finite set of $3^k$ points in $\mathbb{R}^k$, we need to visit all of them (at least once) with a polygonal chain that has the minimum number of line segments, $h(k)$, and we simply define the aforementioned line segments as \textit{lines}. In detail, let $G_k$ be a $3 \times 3 \times \cdots \times 3$ grid in ${\mathbb{N}_0}^k$, we are asked to join all the points of $G_k$ with a minimum (link) length covering trail $C:=C(k)$ \linebreak($C(k)$ represents any trail consisting of $h(k)$ lines), without letting one single line of $C$ go outside of a $3 \times 3 \times \cdots \times 3$, $k$-dimensional, axis-aligned bounding box (i.e., remaining inside a $4 \times 4 \times \cdots \times 4$ AABB in $\mathbb{R}^k$, which strictly contains $G_k$, and we call it \textit{box})''.

It is trivial to note that the formulation of our problem is equivalent to asking:

\noindent ``Which is the minimum number of turns ($h(k)-1$) to visit (at least once) all the points of the $k$-dimensional grid $G_k=\{(0,1,2) \times (0,1,2) \times \cdots \times (0,1,2)\}$ with a connected series of line segments (i.e., a possibly self-crossing polygonal chain allowed to turn at nodes and Steiner points)?'' \cite{9, 10}.

The goal of the present paper is to solve the $3^k$-point problem, for every positive integer $k$.

We introduce a general algorithm, that we name the \textit{clockwise-algorithm}, which produces optimal covering trails for the $3^k$-point problem. In particular, we show that $C(k)$ has $h(k)=\frac{3^k-1}{2}$ lines, answering the most spontaneous $106$ years old question that arose from the original Loyd's puzzle \cite{2}.

The aspect of the $3^k$-point problem that most amazed us, when we eventually solved it, is the central role of Loyd’s expected solution for the $k=2$ case. In fact, the clockwise-algorithm, able to solve the main problem in a $k$-dimensional space, is the natural generalization of the classic solution of the nine-dot puzzle.


\section{Solving the \texorpdfstring{$\bm{3^k}$-} -point problem} \label{sec:2}

The stated $3^k$-point optimization problem, especially for $k<4$, appears to have concrete applications in manufacturing, drone routing, cognitive psychology, and integrated circuits (VLSI design). Many suboptimal bounds have been proved for the NP-complete \cite{11} $3^k$-point problem under additional constraints (such as limiting the solutions to Hamiltonian paths or considering only rectilinear spanning paths \cite{5, 7, 12}), but (to the best of our knowledge) the $3^{k>3}$-point problem remains unsolved to the present day, and this paper provides its first exact solution \cite{13}.

\subsection{A tight lower bound} \label{sec:SUBSECTION 2.1}

Given the $3^k$-point problem as introduced in Section \ref{sec:Intr}, if we remove its constraint on the inside the box solutions, then we have that a lower bound is provided by Theorem \ref{Theorem 1}.

\begin{theorem} \label{Theorem 1} For every positive integer $k$, $h(k) \geq \frac{3^k-1}{2}$.
\end{theorem}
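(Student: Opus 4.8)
The plan is to show that a polygonal chain with $\ell$ line segments can pass through at most $2\ell+1$ of the points of $G_k$; since a covering trail must hit all $3^k$ of them, this forces $2\ell+1\ge 3^k$, i.e.\ $\ell\ge\frac{3^k-1}{2}$, which is Theorem~\ref{Theorem 1}.

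First I would nail down two elementary facts. (i) Any straight line meets $G_k=\{0,1,2\}^k$ in at most $3$ points, and if it meets it in exactly $3$ they are equally spaced with the middle one the midpoint of the extremes: along a non-degenerate line each coordinate is an affine function of the parameter, and an affine function taking values in the $3$-element set $\{0,1,2\}$ is constant or injective, so it cannot assume a fourth value. Call a line meeting $G_k$ in $3$ points a \emph{full line}. (ii) We may assume the trail is \emph{reduced}, i.e.\ no two consecutive segments are collinear (coalescing collinear pieces only lowers $\ell$), so consecutive segments meet in a single point, their common endpoint $v_i$.

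Next I would set up the incremental count along the trail $v_0,s_1,v_1,\dots,s_\ell,v_\ell$: let $N_i$ be the number of points of $G_k$ lying on $s_i$ but not on $s_1\cup\dots\cup s_{i-1}$, so that $\sum_i N_i=3^k$ and $N_i\le 3$ by (i). Clearly $N_1\le 3$, and whenever the turning point $v_{i-1}$ is itself a point of $G_k$ it is already counted on $s_{i-1}$, so $N_i\le 2$. The only way to get $N_i=3$ with $i\ge 2$ is for $v_{i-1}$ to be a Steiner point lying on the full line carrying $s_i$, beyond its triple. Here the key geometric lemma enters: \emph{two distinct full lines of $G_k$ that intersect, intersect in a point of $G_k$}. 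I would prove it by writing each full line with its grid points at parameters $-1,0,1$ (so its direction $d$ lies in $\{-1,0,1\}^k$ and the midpoint has a $1$ in every coordinate where $d$ is nonzero) and a short case analysis on the two directions $d,d'$: if some coordinate is active for $d$ but not for $d'$, the parameter along the first line is pinned to $\{-1,0,1\}$; if $d$ and $d'$ have the same active coordinates, then either the sign patterns force the intersection to be a common midpoint or $d'=\pm d$, contradicting distinctness. Combined with (ii), this lemma shows that a segment with $N_i=3$ and $i\ge 2$ cannot be adjacent to another full segment, so its neighbours carry non-full lines and contribute at most $2$ each.

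The step I expect to be the real obstacle is converting this local information into the sharp global bound $\sum_i N_i\le 2\ell+1$. Forbidding only \emph{consecutive} full segments is not enough — an alternating ``full\,/\,non-full via a Steiner turn'' pattern would a priori give only $\ell\gtrsim\tfrac{2}{5}\,3^k$. To close the gap one must rule out chaining these wasteful configurations: whenever the trail turns at a Steiner point and then sweeps a full line, that full line should be forced to revisit a point of $G_k$ already covered earlier (equivalently, one amortizes each $N_i=3$ against the coverage it costs its non-full neighbours, using the rigidity of full lines through the relevant Steiner extension points). Carrying out this amortization — or, alternatively, running an induction over the three parallel copies of $G_{k-1}$ inside $G_k$ via the identity $\frac{3^k-1}{2}=3\cdot\frac{3^{k-1}-1}{2}+1$ — is the heart of the argument; the remainder is the bookkeeping above together with the final inequality $2\ell+1\ge 3^k$.
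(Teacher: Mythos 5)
Your setup is sound as far as it goes: the bound of $3$ grid points per line is proved correctly, the observation that a turn at a grid point forces $N_i\le 2$ is right, and the lemma that two intersecting full lines of $G_k$ meet in a point of $G_k$ is true and your parametrization (grid points at parameters $-1,0,1$, direction in $\{-1,0,1\}^k$, midpoint with $1$ in every active coordinate) does establish it. But the proposal stops exactly where the theorem begins. As you yourself compute, the local conclusion --- no two adjacent segments can both carry full lines when the turn between them is a Steiner point --- only excludes consecutive $N_i=3$'s, and the alternating pattern $3,2,3,2,\dots$ is still permitted, yielding $\ell\ge\tfrac{2}{5}\,3^k+O(1)$ rather than $\ell\ge\tfrac{3^k-1}{2}$. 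The two repairs you gesture at are not carried out, and neither is routine: the amortization claim (``a full line entered via a Steiner turn is forced to revisit an already-covered point of $G_k$'') is asserted without justification and is not obviously true --- nothing in the full-line lemma prevents a fresh full line from being entered through one of its extension points at parameter $\pm 2$ --- while the slicing induction via $\tfrac{3^k-1}{2}=3\cdot\tfrac{3^{k-1}-1}{2}+1$ runs into the separate difficulty that segments transversal to the three hyperplanes $x_k=0,1,2$ contribute to all three slices at once, so the sub-trails induced on each copy of $G_{k-1}$ are not covering trails in the inductive sense. So there is a genuine gap, and it sits at the single step that carries all the content of the theorem.

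For comparison: the paper does not prove the bound from first principles either; it disposes of $k=1$ directly, cites Keszegh for $k=2$, and for $k>2$ invokes the lower-bound theorem of Reference \cite{13} with $n_i=3$, which yields $h(k)\ge\big\lceil\tfrac{3^k-1}{2}\big\rceil$ immediately. Your attempt is therefore more ambitious than what the paper records, and the two correct ingredients you isolate (the $3$-point bound and the full-line intersection lemma) would be genuinely useful in a self-contained proof; but as written the argument establishes only a strictly weaker inequality.
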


\begin{proof} If $k=1$, then it is necessary to spend (at least) one line to join the $3$ points.

Given $k=2$, we already know that the nine-dot problem cannot be solved with less than $4$ lines (see Reference \cite{14}, assuming $n=3$).

Let $k$ be greater than $2$. We invoke the proof of Theorem \ref{Theorem 1} in Reference \cite{13}, substituting $n_i=3$.

Thus, Equation (4) of the above-mentioned Reference \cite{13} can be rewritten as
\begin{equation} \label{eq1}
h_l(3_1, 3_2,\ldots,3_k )=\ceil*{\frac{3^k-1}{2}},
\end{equation}
which is an integer (since $3^k-1$ is always even).

Therefore, $h(k) \geq h_l(3_1, 3_2, \ldots, 3_k ) = \frac{3^k-1}{2}$ for any (strictly positive) natural number $k$.
\end{proof}

It is redundant to point out that Theorem \ref{Theorem 1} provides also a valid lower bound for any $3^k$-point (arbitrary) \textit{box-constrained} problem. The purpose of the next subsection is to show that this bound matches $h(k)$ for every $k$.

\subsection{The clockwise-algorithm} \label{sec:SUBSECTION 2.2}

To introduce the clockwise-algorithm, let us begin from the trivial case $k=1$. This means that we have to visit $3$ collinear points with a single line, remaining inside a unidimensional box that is $3$ units long.

One solution is shown in Figure \ref{fig:Figure_1}.

\begin{figure}[H]
\begin{center}
\includegraphics[width=\linewidth]{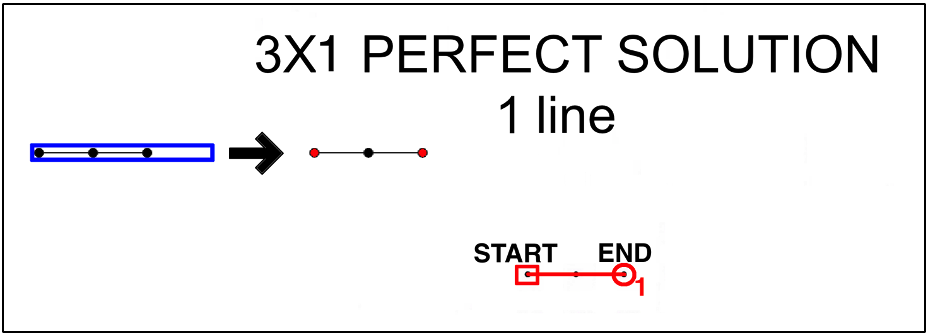}
\end{center}
\caption{Solving the $3 \times 1$ puzzle inside the box ($3$ units of length), starting from one of the line segment endpoints. The puzzle is solvable with this $C(1)$ path starting from both the red points.}
\label{fig:Figure_1}
\end{figure}

Considering the spanning path in Figure \ref{fig:Figure_1}, it is easy to see that we cannot solve the $3^1$-point problem starting from one point of $G_1$ if and only if this point is the central one.

Given $k=2$, we are facing the classic nine-dot puzzle considering a $3 \times 3$ box ($9$ units of area). The well-known Hamiltonian path shown in Figure \ref{fig:Figure_2} proves that we can solve the problem, without allowing any line to exit from the box, if we start from any node of $G_2$ except from the central one \cite{14}.

\begin{figure}[H]
\begin{center}
\includegraphics[width=\linewidth]{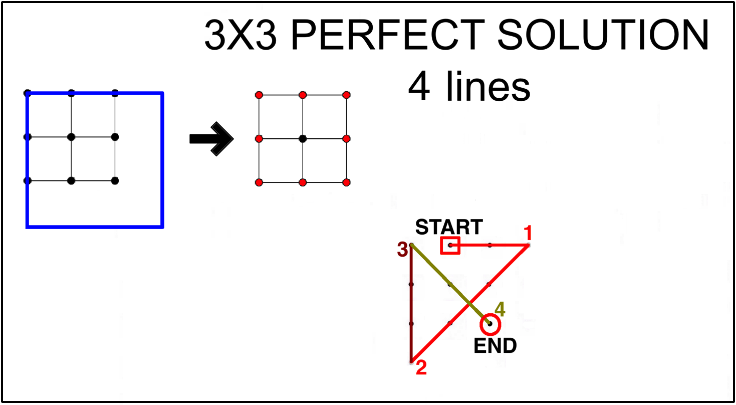}
\end{center}
\caption{$C(2)$ is a path that consists of $h(2)=\frac{3^2-1}{2}$ lines. In order to solve the $3 \times 3$ puzzle with $4$ lines starting from one node of $G_2$, it is necessary to avoid starting from the central point of the grid.}
\label{fig:Figure_2}
\end{figure}

Looking carefully at $C(2)$, as shown in Figure \ref{fig:Figure_2}, we note that line $1$ includes $C(1)$ if we simply extend it by one unit backward. Thus, $C(1)$ and the first line of $C(2)$ are essentially the same trail, and so they are considering the clockwise-algorithm. Line $2$ can be obtained from line $1$ going backward when we apply a standard rotation of $\frac{\pi}{4}$ radians: we are just spinning around in a two-dimensional space, forgetting the $3^{2-1}-1$ collinear points that will later be covered by the repetition of $C(1)$ following a different direction. We are now able to understand what line $3$ really is: it is just a link between the repeated $C(2-1)$ trail backward and the final $C(2-1)$ trail following the new direction. In general, the aforementioned link corresponds to line
$2 \cdot h(k-1)+1=3^{k-1}$ of any $C(k)$ generated by the clockwise-algorithm.

\begin{definition} \label{def1}
Let $G_3 \coloneqq \{(0,1,2) \times (0,1,2) \times (0,1,2)\}$. We call ``nodes'' all the $27$ points of $G_3$, as usual. In particular, we indicate the nodes $V_1 \equiv (0,0,0)$, $V_2 \equiv (2,0,0)$, $V_3 \equiv (0,2,0)$, $V_4 \equiv (0,0,2)$, $V_5 \equiv (2,2,0)$, $V_6 \equiv (2,0,2)$, $V_7 \equiv (0,2,2)$, $V_8 \equiv (2,2,2)$ as ``vertices'', we indicate the nodes $F_1 \equiv (1,1,0)$, $F_2 \equiv (1,0,1)$, $F_3 \equiv (0,1,1)$, $F_4 \equiv (2,1,1)$, $F_5 \equiv (1,2,1)$, $F_6 \equiv (1,1,2)$ as ``face-centes'', we call ``center'' the node $X_3 \equiv (1,1,1)$, and we indicate as ``edges'' the remaining $12$ nodes of $G_3$.
\end{definition}

Now, we are ready to describe the generalization of the original Loyd's covering trail to higher dimensions. Given $k=3$, a minimum length covering trail has already been shown in Reference \cite{13}, but this time we need to solve the problem inside a $3 \times 3 \times 3$ box. Our strategy is to follow the optimal two-dimensional covering trail (see Figure \ref{fig:Figure_2}) swirling in one more dimension, according to the $3$-step scheme given by lines $1$ to $3$ of $C(2)$, and beginning from a congruent starting point.

Thus, if we take one vertex of $G_3$, while we rotate in the space at every turn (as observed for $k=2$), it is possible to repeat twice (forward and backward) the whole $C(2)$ or, alternatively (Figure \ref{fig:Figure_3}), we can follow $\frac{8}{3}$ times the scheme provided by lines $1$ to $3$. In both cases, at the end of the process, $3^{3-2}-\frac{1}{3}$ gyratories have been performed, so we spend the $(3^{3-1})$-th line to close the sub-tour ($C(3)$ can never be a circuit plus we avoided extending its first line backward, but we have already seen that this fact does not really matter), joining $3-1$ new points. In this way, we reach the starting vertex again, and the last $3^3-1$ unvisited nodes belong only to $G_{k-1}=G_2$ (choosing the right direction). Therefore, we can finally paste $C(2)$ (Figure \ref{fig:Figure_2}) by extending one unit backward its first line (the new $(2 \cdot h(3-1)+2)$-th line) to visit all the $3^2$ nodes of $G_{3-1}$.

\begin{figure}[H]
\begin{center}
\includegraphics[width=\linewidth]{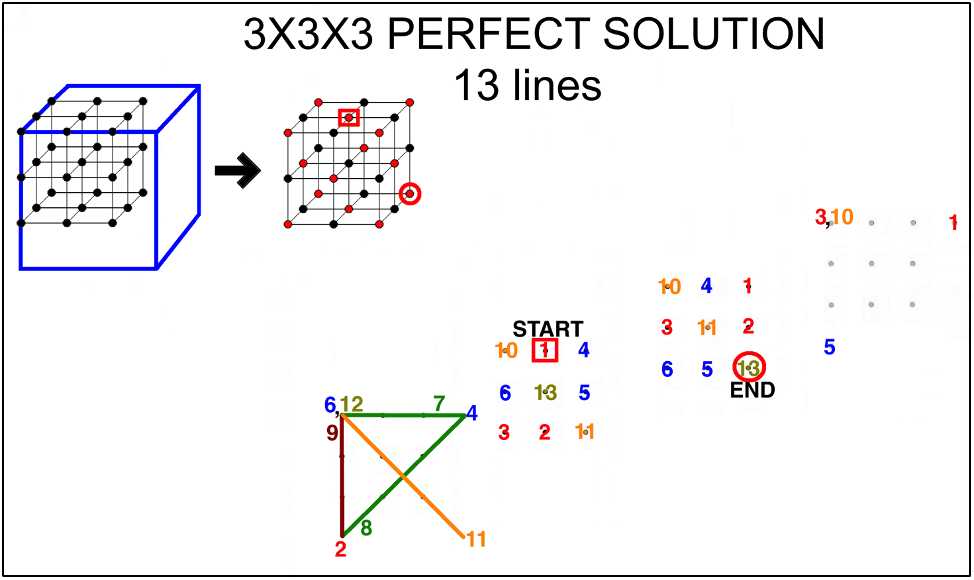}
\end{center}
\caption{$C(3)$ solves the $3 \times 3 \times 3$ puzzle inside a $3 \times 3 \times 3$ box ($27$ cubic units of volume), starting from face-centers or vertices, thanks to the clockwise-algorithm.}
\label{fig:Figure_3}
\end{figure}

Before moving on $k=4$, we wish to prove that the $3^3$-point problem is solvable starting from any node of $G_3$ if we exclude the center of the grid (as we have previously seen for \linebreak$k \in \{1,2\}$). This result immediately follows from symmetry when we combine the trails shown in Figures \ref{fig:Figure_3}\&\ref{fig:Figure_4}.

\begin{figure}[H]
\begin{center}
\includegraphics[width=\linewidth]{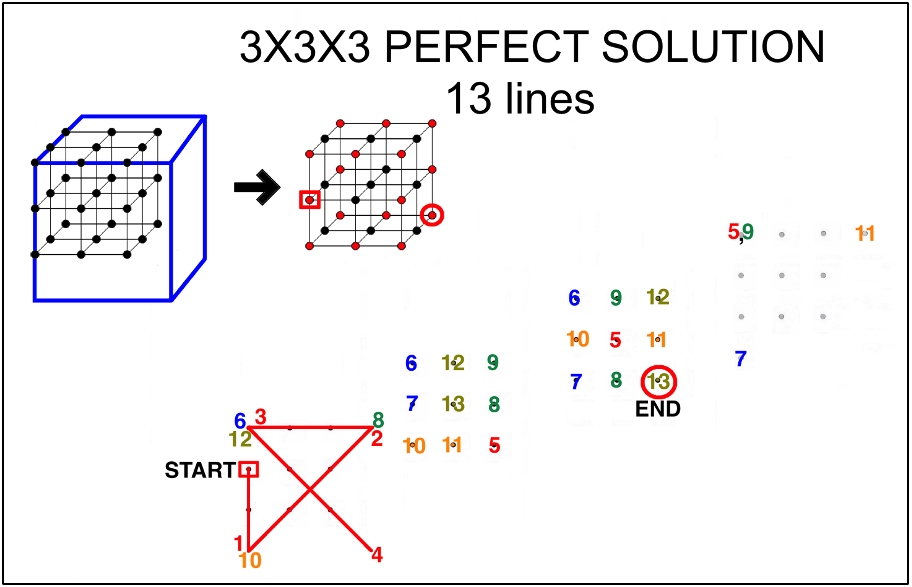}
\end{center}
\caption{Solving the $3 \times 3 \times 3$ puzzle inside a $3 \times 3 \times 3$ box ($27$ cubic units of volume), starting from edges or vertices.}
\label{fig:Figure_4}
\end{figure}

The number of solutions with $\frac{3^k-1}{2}$ lines increases as $k$ grows. Moreover, if we remove the box constraint, we can find new minimal covering trails \cite{13}, including those that reproduce (on a given $3 \times 3$ subgrid of $G_3$) the endpoints by Figure \ref{fig:Figure_2}, as shown in Figure \ref{fig:Figure_5}.

\begin{figure}[H]
\begin{center}
\includegraphics[width=\linewidth]{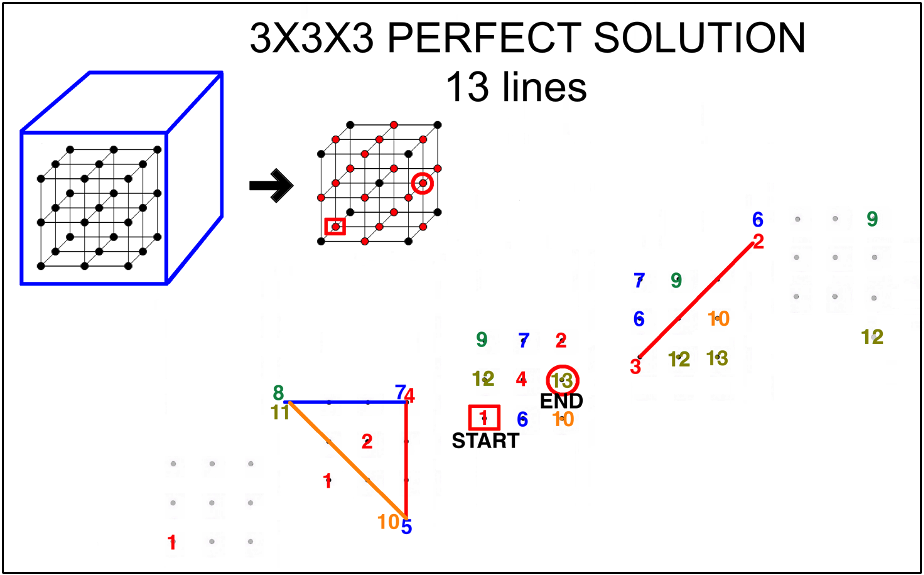}
\end{center}
\caption{Solving the $3 \times 3 \times 3$ puzzle inside a $3 \times 3 \times 4$ box ($36$ cubic units of volume).}
\label{fig:Figure_5}
\end{figure}

Finally, we present the solution to the $3^4$-point problem. Two examples of minimum length covering trails generated by the clockwise-algorithm are given.

The method to find $C(4)$ is basically the same one that we have previously discussed for $G_3$. So, we utilize the standard pattern shown in Figure \ref{fig:Figure_3} as we used $C(2)$ in order to solve the $3^3$-point problem. We apply $C(3)$ forward (while we spin around following the $3$-step gyratory as shown in Figure \ref{fig:Figure_6}), then backward (Figure \ref{fig:Figure_7}), subsequently we return to the starting vertex with line $27$ (the $(2 \cdot h(4-1)+1)$-th link), and lastly, we join the $3^3-1$ unvisited nodes with $C(3)$ by simply extending backward its first line (corresponding to the $28$-th link of $C(4)$ – see Figure \ref{fig:Figure_8}).

\begin{figure}[H]
\begin{center}
\includegraphics[width=\linewidth]{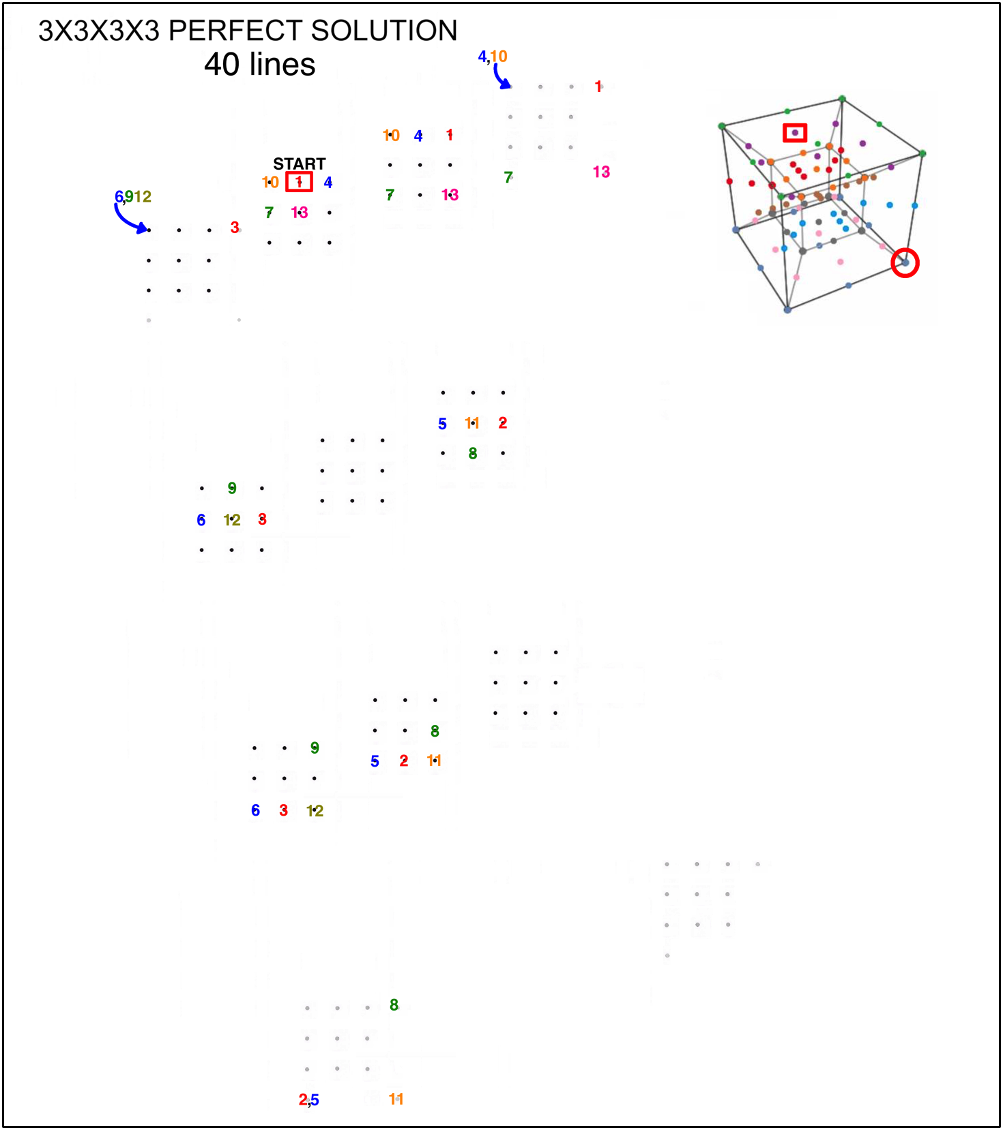}
\end{center}
\caption{Lines $1$ to $13$ of $C(4)$ following $C(3)$, as shown in Figure \ref{fig:Figure_3}.}
\label{fig:Figure_6}
\end{figure}

\begin{figure}[H]
\begin{center}
\includegraphics[width=\linewidth]{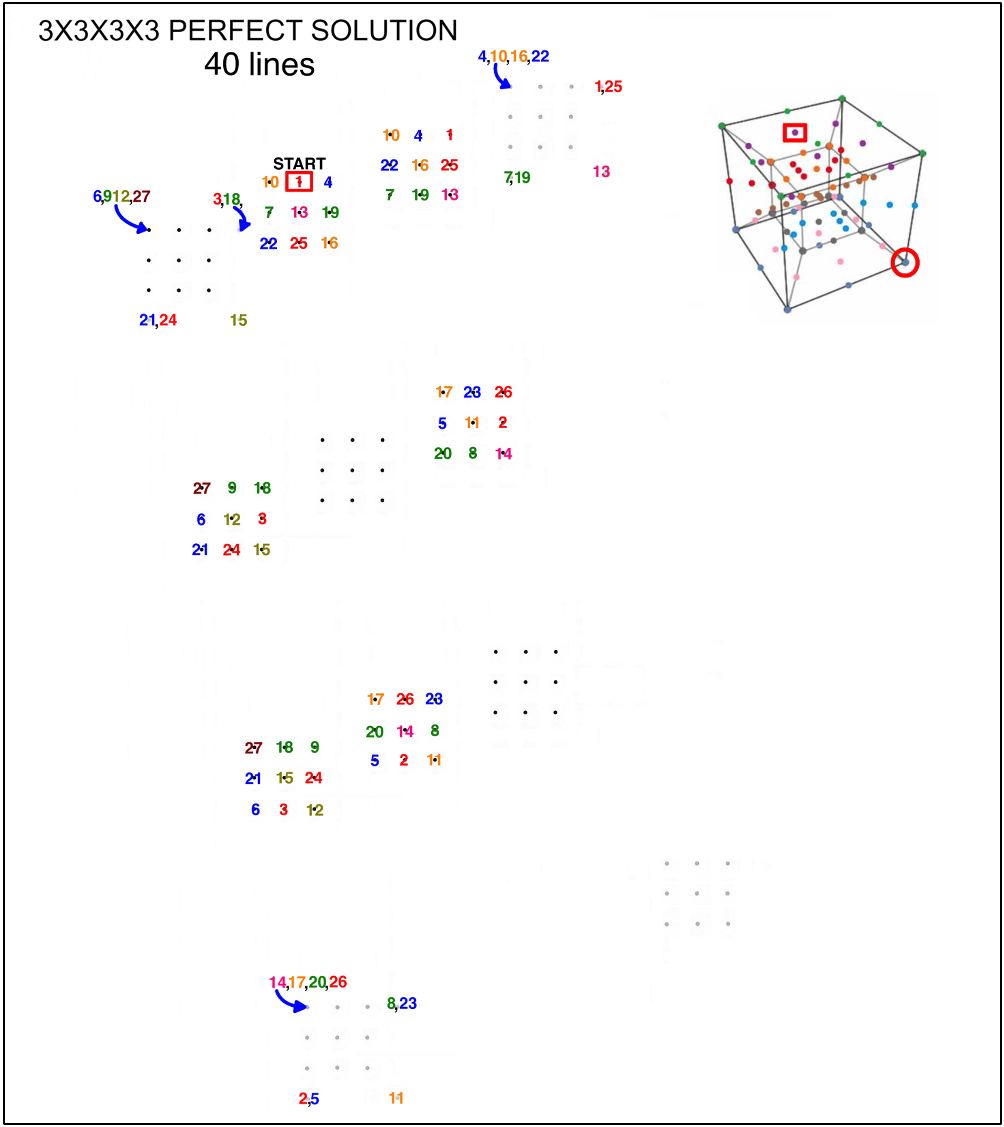}
\end{center}
\caption{Lines $14$ to $27$ of $C(4)$ following $C(3)$ backward, the $27$-th link to come back to the “starting point'' is also included.}
\label{fig:Figure_7}
\end{figure}

\begin{figure}[H]
\begin{center}
\includegraphics[width=\linewidth]{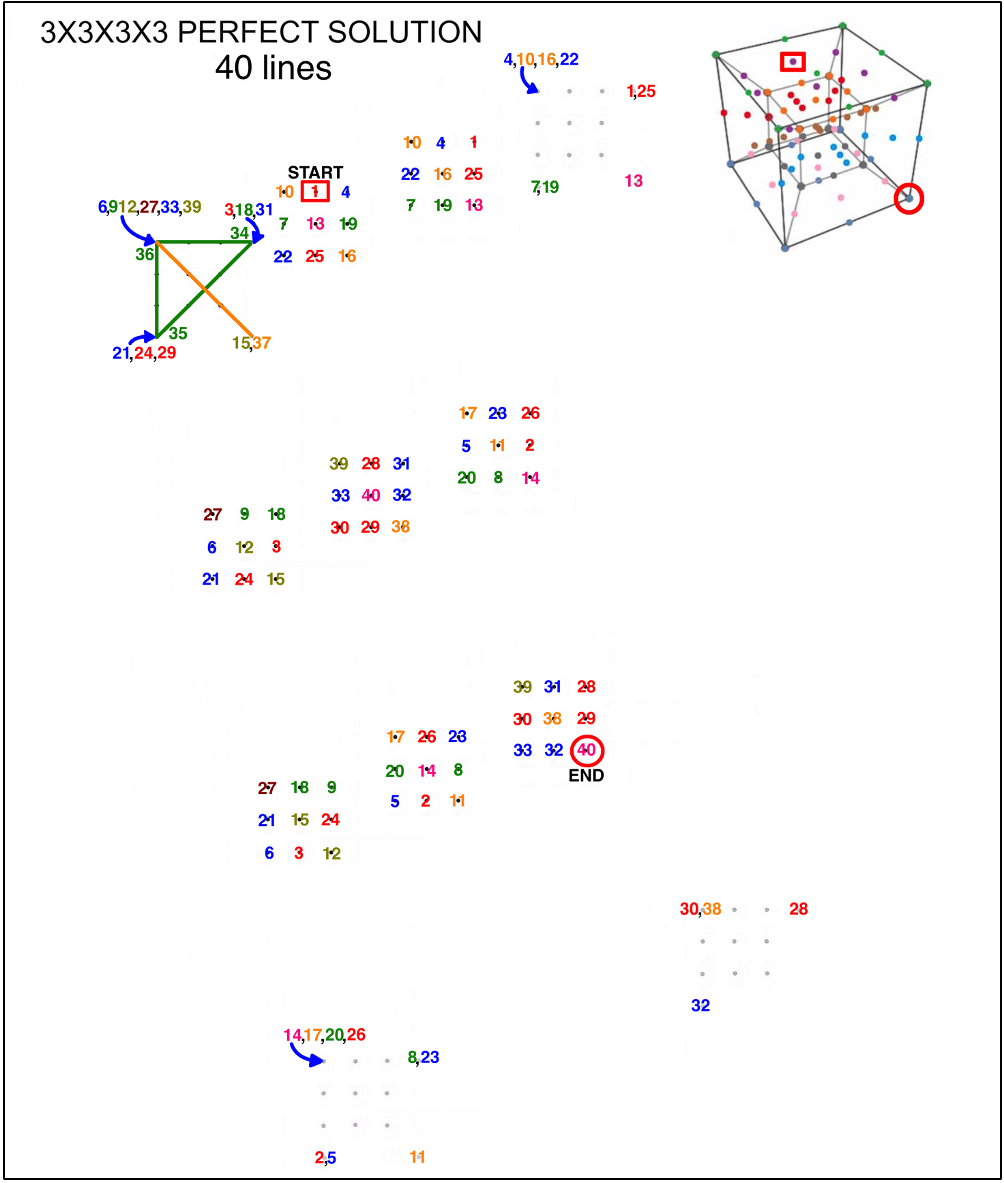}
\end{center}
\caption{A minimum length covering trail that completely solves the $3 \times 3 \times 3 \times 3$ puzzle with $40$ lines, inside a $3 \times 3 \times 3 \times 3$ box (hyper-volume $81$ units$^4$), thanks to the clockwise-algorithm applied to $C(3)$ from Figure \ref{fig:Figure_3}.}
\label{fig:Figure_8}
\end{figure}

The clockwise-algorithm reduces the complexity of the $3^k$-point problem to the complexity of the $3^{k-1}$-point one. A clear example is shown in Figure \ref{fig:Figure_9}.

\begin{figure}[H]
\begin{center}
\includegraphics[width=\linewidth]{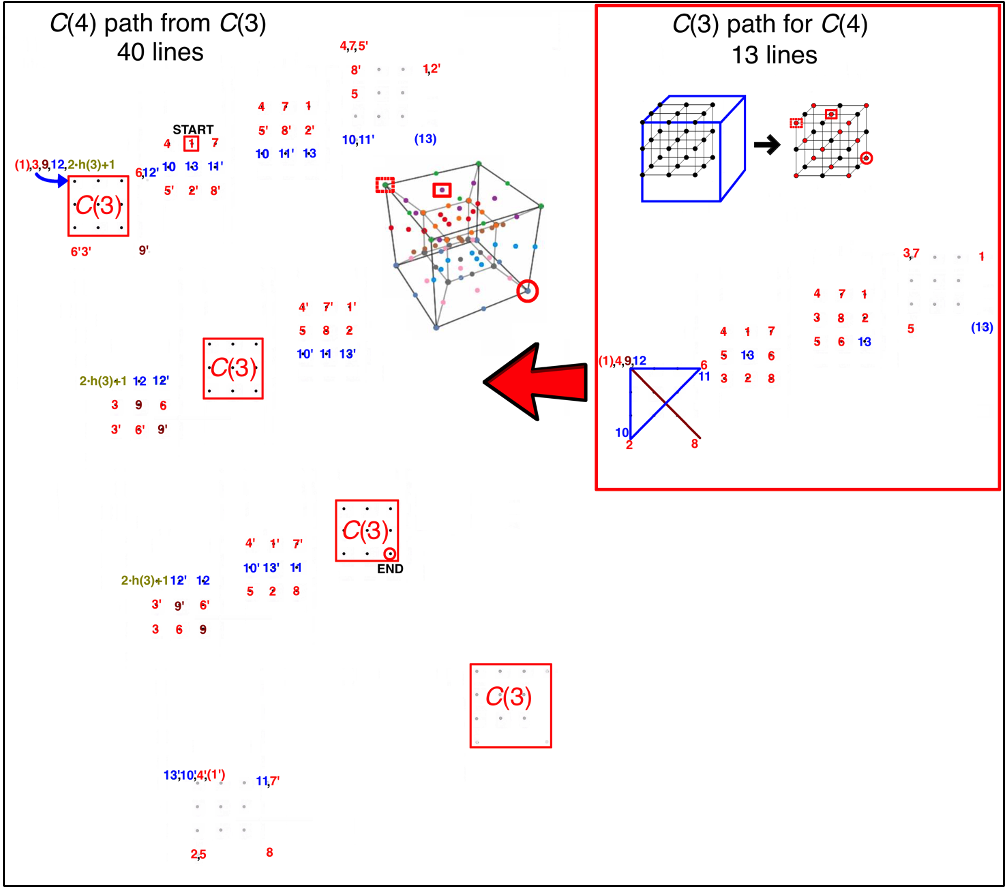}
\end{center}
\caption{How the clockwise-algorithm concretely works: it takes a minimum length covering trail $C(3)$ as input, and returns $C(4)$. Lines $1$-$13$ belong to the covering trail $C(3)$ (shown in the upper-right quadrant), line $13'$ follows line $13$ and belongs to $C(3)$ backward. $C(3)$ backward ends with line $1'$: it is extended (by one unit) in order to be connected to the $(2 \cdot h(3)+1)$-th link, and this allows $C(3)$ to be repeated
one more time (joining the remaining $26$ unvisited nodes).}
\label{fig:Figure_9}
\end{figure}

Since the clockwise-algorithm takes $C(k-1)$ as input and returns $C(k)$ as its output, it can be applied to any $C(k)$ in order to produce some $C(k+1)$ consisting of $h(k+1)=3 \cdot h(k)+1$ lines. Thus, it is possible to show by induction on $k$ that the $3^k$-point problem can be solved, inside a $3 \times 3 \times \cdots \times 3$ box of hyper-volume $3^k$ units$^k$, drawing optimal trails with $3 \cdot h(k-1)+1$ lines (Figure \ref{fig:Figure_10}).

Therefore, $\forall k \in \mathbb{N}-\{0\}$,
\begin{equation} \label{eq2}
h(k+1)=3 \cdot h(k)+1=\frac{3^{k+1}-1}{2}.
\end{equation}
\begin{figure}[H]
\begin{center}
\includegraphics[width=\linewidth]{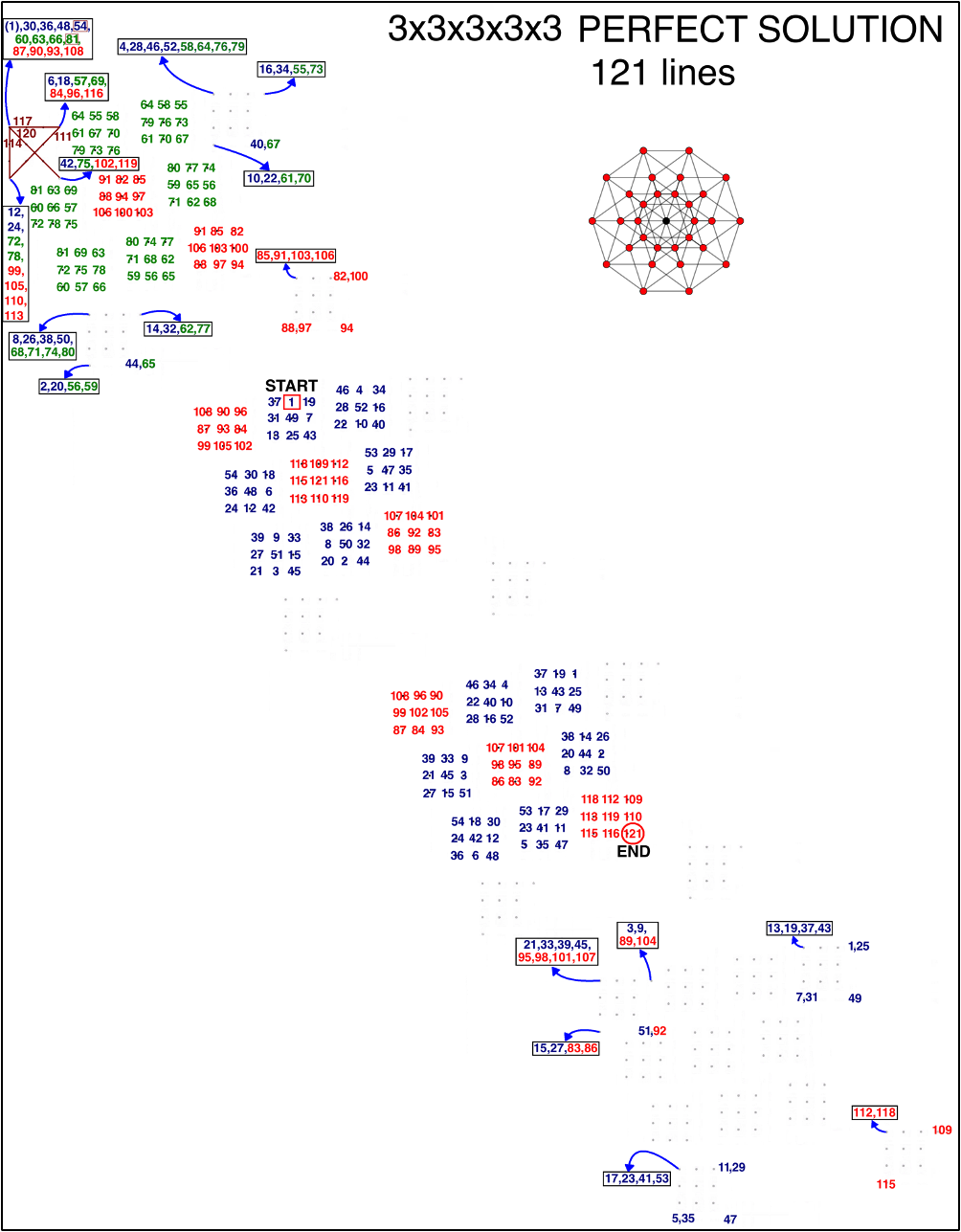}
\end{center}
\caption{For every integer $k$ greater than $1$, the $3^k$-point problem can be explicitly solved by the clockwise-algorithm ($k=5$ in our example). A $C(k)$ with $\frac{3^k-1}{2}$ lines immediately follows from any valid $C(k-1)$, and this surely occurs if $C(k-1)$ has one of its endpoints in a vertex of $G_{k-1}$.}
\label{fig:Figure_10}
\end{figure}


\section{Covering \texorpdfstring{$\bm{3^k}$ } -points by trees} \label{sec:3}

\begin{definition} \label{def2}
We call a tree any acyclic connected arrangement of line segments (i.e., edges of the tree) that covers some of the nodes of $G_k$, and we denote as $T(k)$ any tree (drawn in $\mathbb{R}^k$) that covers all the points belonging to the $k$-dimensional grid $G_k$. More specifically, $T(k)$ represents a covering tree for $G_k$ of size $t(k)$ (i.e., $T(k)$ has $t(k)$ edges).
\end{definition}

In 2014, Dumitrescu and T\'oth \cite{15} showed the existence of an inside the box covering tree for $G_k$, $\forall k \in \mathbb{N}-\{0\}$, of size $t_u(k)=h(k)=\frac{3^k-1}{2}$ (e.g., the set of all the endpoints of the $13$ edges of $t_u(3) \subset G_3$ – see Definition \ref{def1}). It is not hard to prove that, when we take as constraints our $3 \times 3 \times \cdots \times 3$ boxes (as usual), the upper bound $t_u(k)$ is not tight for every $k>3$.

\begin{lemma} \label{Lemma 1} Let $box \coloneqq \{(-1,0,1,2) \times (-1,0,1,2) \times \cdots \times (-1,0,1,2)\} \subset \mathbb{Z}^k$. For every $k$ greater than $3$, there exists a covering tree, $T(k)$, for $G_k$ whose all its vertices belong to box and such that $T(k)$ has size $t(k)<h(k)$.
\end{lemma}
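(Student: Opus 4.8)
The plan is to improve on the Dumitrescu--T\'oth bound $t_u(k)=h(k)$ by combining two ingredients: a self-reducing ``layer'' construction that lets dimension $k$ inherit any saving already obtained in dimension $k-1$, and one explicit in-box covering tree, in a small dimension, that is strictly shorter than the trail bound $h$.

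First I would set up the recursion. Writing $G_k$ as the disjoint union of the three slices $G_{k-1}\times\{0\}$, $G_{k-1}\times\{1\}$, $G_{k-1}\times\{2\}$, take three axis-parallel translates of an in-box covering tree $T(k-1)$, one on each slice. Since $T(k-1)$ already lies in $\{-1,0,1,2\}^{k-1}$ and the new coordinate only takes the values $0,1,2$, the union stays inside the box, and since each translate covers a whole slice, all $3^k$ nodes are covered. It then remains to glue the three trees acyclically: picking a node $b\in G_{k-1}$ that is a vertex of $T(k-1)$ (such a $b$ always exists, since a tree on $\le h(k-1)$ edges has fewer than $3^{k-1}/2$ edges and hence cannot have all covered nodes interior to edges), I add the two unit segments $(b,0)$--$(b,1)$ and $(b,1)$--$(b,2)$; each runs strictly between two consecutive slices and meets the slices only at its vertex endpoints, so it creates neither a crossing nor a cycle. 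This yields $t(k)\le 3\,t(k-1)+2$. Comparing with $h(k)=3\,h(k-1)+1$, the deficit $D(k):=h(k)-t(k)$ satisfies $D(k)\ge 3\,D(k-1)-1$, so $D(k_0)\ge 1$ already forces $D(k)\ge 1$ --- in fact $D(k)\ge\tfrac12\left(3^{\,k-k_0}+1\right)$ --- for every $k\ge k_0$.

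Second, the base case. It suffices to exhibit one covering tree of $G_4$ all of whose vertices (Steiner points included) lie in $\{-1,0,1,2\}^4$ and which has at most $h(4)-1=39$ edges; then the recursion above gives $t(k)<h(k)$ for every $k\ge 4$, i.e.\ for every $k>3$. (Alternatively, if the size-$12$ tree for $G_3$ can be realized inside $\{-1,0,1,2\}^3$, one may start the recursion at $k_0=3$.) I would build this $G_4$ tree by modeling it on the Dumitrescu--T\'oth tree $t_u(4)$ and performing a local surgery that trades two edges for one by rerouting a segment through a Steiner point placed on a boundary hyperplane $x_i=-1$; then I would verify, by direct inspection of the finitely many vertices and edges, that the result is connected, acyclic, covers all $81$ nodes, and stays in the box.

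The hard part is exactly this base case. A saving is not immediate because of a conservation phenomenon: if one uses only axis-parallel segments of lattice length $2$ --- which is what every naive stacking or column-merging of subgrids produces --- each edge carries exactly one interior node, the connectivity requirement pins the edge count at exactly $h(k)$, and one never drops below it (this is the same accounting that makes the clockwise-algorithm optimal among trails). Any genuine improvement must therefore be non-product and must exploit the extra room afforded by the coordinate value $-1$, so the base-case configuration has to be designed and checked by hand; once it is in place, the routine recursion closes the lemma for all $k>3$, with a gap from $h(k)$ that grows geometrically in $k$.
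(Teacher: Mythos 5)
Your recursive step is essentially the paper's: decompose $G_k$ into three parallel slices, place a translate of an in-box tree on each, and glue with connector segments running along the new axis (the paper merges your two unit connectors into a single length-$2$ segment through a common vertex, getting $t(k)\le 3t(k-1)+1$ rather than $+2$, but either recursion propagates a deficit $D(k)=h(k)-t(k)\ge 1$ from any base dimension upward). So the reduction is sound.

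The genuine gap is the base case, which you yourself flag as ``the hard part'' and then do not carry out. You propose to obtain an in-box covering tree of $G_4$ with at most $39$ edges by ``modeling it on the Dumitrescu--T\'oth tree and performing a local surgery \ldots then verify by direct inspection,'' but no such tree is exhibited, and the lemma is precisely the assertion that one exists; a plan to search for it is not a proof. The paper closes this gap with a concrete construction whose key idea is different from a local surgery on $t_u(4)$: it exhibits an explicit $12$-edge tree inside a $2\times 2\times 3$ box that covers $26$ of the $27$ nodes of $G_3$ (one node deliberately left uncovered). Stacking three copies of this pattern along the fourth axis leaves exactly three uncovered nodes, which are collinear in the $w$-direction, so a single additional segment sweeps up all three at once; two further connector segments join the three copies, giving $t(4)\le 3\cdot 12+1+2=39<40=h(4)$. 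The trick of sacrificing coverage of one well-chosen node in dimension $3$ so that the three omissions align in dimension $4$ is exactly the ingredient your proposal is missing; without it (or some other explicit sub-$h(4)$ tree), the lemma is not established.
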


\begin{proof}
We invoke Theorem \ref{Theorem 1} to remember that $h(k) \geq \frac{3^k-1}{2}$. It follows that it is sufficient to provide a general strategy to cover $G_k$ with a tree consisting of $\frac{3^k-1}{2}-c(k>3)$ edges, for some $c(k>3) \geq 1$. The tree in $\mathbb{R}^3$ shown in Figure \ref{fig:Figure_11}, which covers $3^3-1$ nodes of $G_3$ with its $12$ edges, also provides a valid upper bound for $t(4)$, since it is sufficient to clone twice the same pattern and spend one more link to join the remaining three collinear points belonging to each copy of $G_3$. So, we add $2$ more lines (at most) to connect every duplicated tree (to the other two copies of itself) and to fix the aforementioned link (which joins the last $3$ unvisited nodes of $G_4$), in order to create a covering tree of size $39$.

\begin{figure}[H]
\begin{center}
\includegraphics[width=\linewidth]{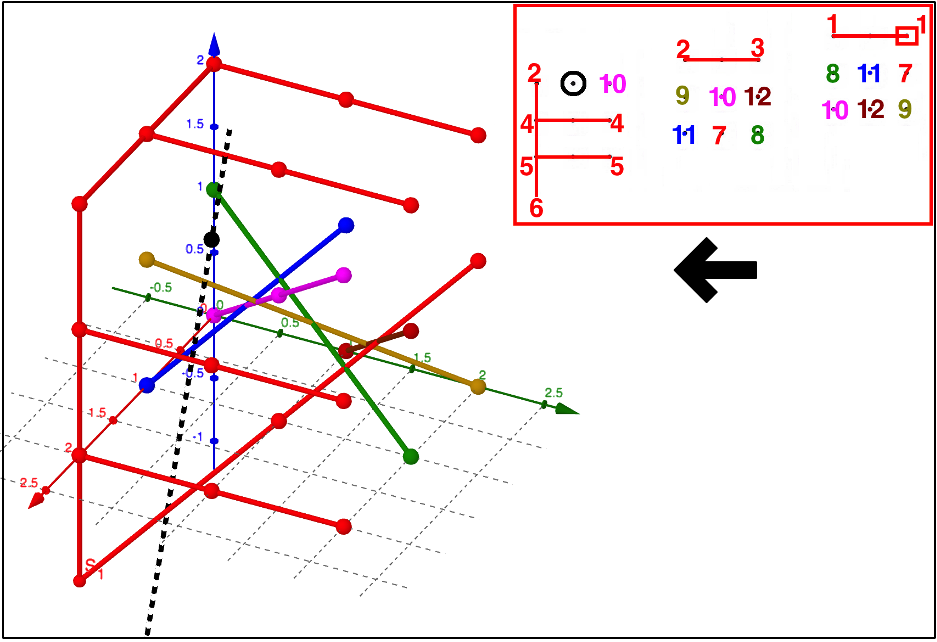}
\end{center}
\caption{An inside the ($2 \times 2 \times 3$) box tree with $t_u (3)-1=12$ edges that covers all the points of $G_3$ except the black one. The black dotted line represents the direction ($w$-axis) to fit the remaining three collinear points of $G_4$ when we replicate three times the same pattern \protect\cite{16}.}
\label{fig:Figure_11}
\end{figure}

Thus, we can generalize our result to all $k \geq 4$,
\begin{equation} \label{eq3}
t(k) \leq 3 \cdot t(k-1)+1 \leq 39 \cdot 3^{k-4}+\sum_{i=1}^{k-5} 3^i+1. 
\end{equation}

Hence,
\begin{equation} \label{eq4}
t(k) \leq \frac{3^{k-4}-1}{2}+13 \cdot 3^{k-3}.
\end{equation}

Therefore, $h(k)-t(k) \geq 3^{k-4} \geq 1$ holds for every $k \geq 4$.	
\end{proof} 

We are finally ready to remove the box constraint. Without any restriction to our \textit{thinking outside the box} ability, we are free to apply cleverly the idea introduced by Figure \ref{fig:Figure_11} to prove the existence of a covering tree for $G_3$ of size $t(3)=n^2+n$ (here $n$ assumes the odd value $3$ – see Reference \cite{15}, Section 3).

\begin{theorem} \label{Theorem 2}
The inequality $t(k)<h(k)$ holds if and only if $k \geq 3$.
\end{theorem}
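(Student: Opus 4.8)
The plan is to split Theorem~\ref{Theorem 2} into its two implications and dispatch each by a short case analysis, reusing the machinery already built up in the paper. For the ``if'' direction ($k \ge 3 \Rightarrow t(k) < h(k)$), the range $k \ge 4$ is already settled by Lemma~\ref{Lemma 1}, which gives $h(k) - t(k) \ge 3^{k-4} \ge 1$. So the only case that needs a separate argument is $k = 3$: here I would appeal to the outside-the-box construction announced just before the theorem, namely a covering tree for $G_3$ of size $t(3) = n^2 + n$ with $n = 3$ (the idea of Figure~\ref{fig:Figure_11} used once the box restriction is dropped, cf.\ Reference~\cite{15}, Section~3). Since $h(3) = \frac{3^3 - 1}{2} = 13$ while this tree has $12$ edges, $t(3) \le 12 < 13 = h(3)$, which closes the ``if'' direction.

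For the ``only if'' direction I would prove the contrapositive, i.e.\ $t(k) \ge h(k)$ for $k \in \{1,2\}$. When $k = 1$ this is immediate: one segment is both necessary and sufficient to cover three distinct collinear points, so $t(1) = 1 = h(1)$. The real content is $k = 2$, where I must show that no tree with at most three edges covers all nine points of $G_2$; combined with the $4$-edge Hamiltonian path of Figure~\ref{fig:Figure_2} (which is itself a tree) this yields $t(2) = 4 = h(2)$. The argument I have in mind is the following. Every line of $\mathbb{R}^2$ meets the $3 \times 3$ grid $G_2$ in at most three points, because a non-vertical line contains at most one grid point in each of the three columns, and a vertical line at most one in each of the three rows. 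Hence three segments cover at most nine grid points, and covering all nine forces each of the three edges to contain exactly three grid points, with the three resulting triples pairwise disjoint, i.e.\ a partition of $G_2$ into three collinear triples. Enumerating the collinear triples of $G_2$ (the three rows, the three columns, the two main diagonals), one checks that the only partitions of this type are ``the three rows'' and ``the three columns''. In either case the three supporting lines are mutually parallel, so the three segments cannot be joined into a connected figure, contradicting the connectivity required of a tree. Thus $t(2) \ge 4$, and the contrapositive, hence Theorem~\ref{Theorem 2}, follows.

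I expect the main obstacle to be making the $k = 2$ lower bound fully watertight. One has to be careful that (i) ``at most three grid points per segment'' is applied correctly even though an edge may run far outside the box, (ii) the tree's edges might a priori meet only at Steiner points rather than at grid nodes, and (iii) two edges could in principle be collinear. All three are neutralised by the same observation: the covering requirement forces the extremal configuration (exactly three grid points on each of three edges, pairwise disjoint as grid sets), and from there the enumeration of collinear triples together with the parallelism of the two admissible partitions gives the contradiction by a purely combinatorial count, with no further geometry needed. By comparison, the $k \ge 3$ half of the statement is essentially immediate once Lemma~\ref{Lemma 1} and the $n^2 + n$ tree of Reference~\cite{15} are invoked.
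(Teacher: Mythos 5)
Your decomposition is exactly the paper's: Lemma~\ref{Lemma 1} disposes of $k\ge 4$, the $12$-edge covering tree of $G_3$ (your ``$n^2+n$ with $n=3$'' tree, which is precisely the object exhibited in Figure~\ref{fig:Figure_12}) gives $t(3)\le 12<13=h(3)$, and the cases $k\in\{1,2\}$ are handled by showing $t(k)=h(k)$. The one genuine difference is at $k=2$: the paper simply cites Keszegh~\cite{14} for $t(2)=h(2)=4$, whereas you prove the lower bound $t(2)\ge 4$ from scratch, and your argument is correct and watertight. Each line meets $G_2$ in at most three points, so three edges covering all nine points would force three pairwise disjoint collinear triples; among the eight collinear triples of $G_2$ (three rows, three columns, two diagonals) the only such partitions are the three rows or the three columns, and in either case the three supporting lines are parallel and distinct, so the three segments are pairwise disjoint and cannot form a connected tree. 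You also correctly note that this extremal counting automatically rules out collinear edge pairs and is insensitive to Steiner points. What your route buys is self-containment at $k=2$; what the paper's route buys is brevity there, plus (in its Equations~(\ref{eq6})--(\ref{eq8})) an explicit quantitative gap $h(k)-t(k)\ge 3^{k-3}$ for $k\ge 3$, which is stronger than the bare inequality the theorem asserts but is not needed for the stated equivalence.
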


\begin{proof}
Let $k=1$; it is trivial to verify that $t(1)=h(1)=1$.

If $k=2$, then $t(2)=h(2)=4$ (see Reference \cite{14}).

Thus, let $k=3$. Figure \ref{fig:Figure_12} shows the existence of a covering tree of size 
\begin{equation} \label{eq5}
12=t(3)<h(3)=13.
\end{equation}

\begin{figure}[H]
\begin{center}
\includegraphics[width=\linewidth]{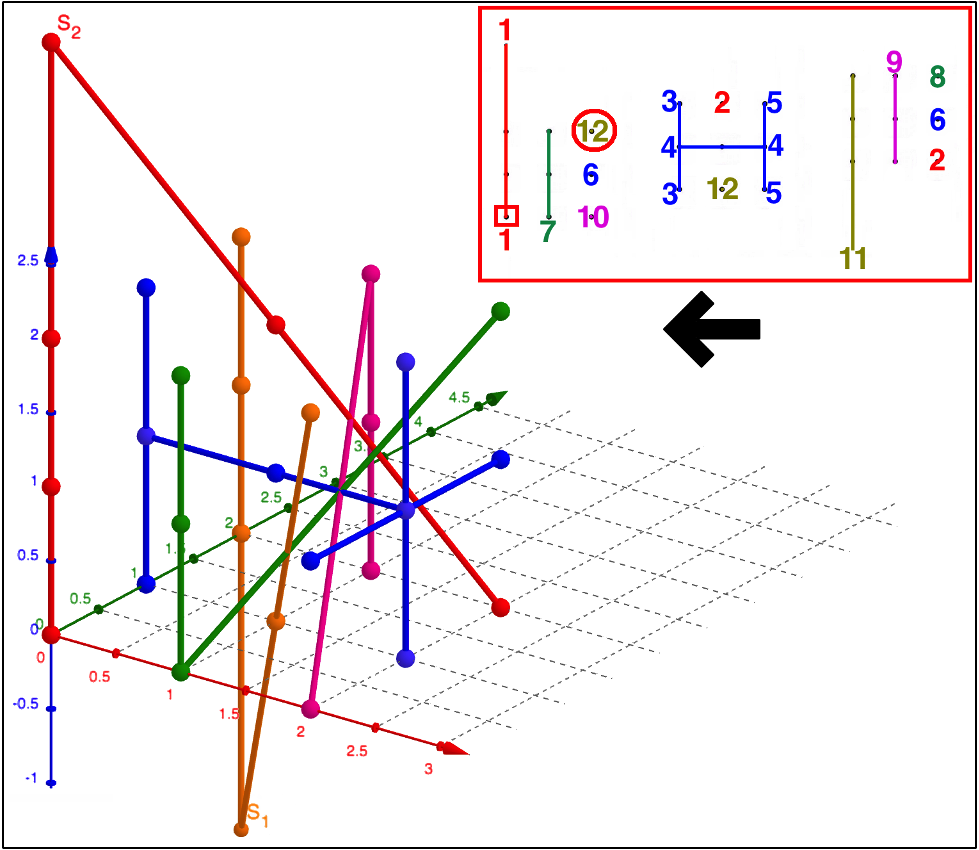}
\end{center}
\caption{A covering tree with $t(3)=12$ edges. $T(3)$ joins all the points of $G_3$ \protect\cite{16}.}
\label{fig:Figure_12}
\end{figure}

If $k \geq 4$, then Lemma \ref{Lemma 1} states that $t(k)<h(k)$. In particular, Equation (\ref{eq3}) shows that
\begin{equation} \label{eq6}
t(k) \leq (3 \cdot t(3)+1) \cdot 3^{k-4}\sum_{i=1}^{k-5} 3^i+1.
\end{equation}

Hence,
\begin{equation} \label{eq7}
t(k) \leq \frac{25 \cdot 3^{k-3}-1}{2}. 
\end{equation}

Since we already proven that $h(k)=\frac{3^k-1}{2}$ is optimal, it follows that
\begin{equation} \label{eq8}
h(k)-t(k) \geq \frac{3^k-1}{2}-\frac{25 \cdot 3^{k-3}-1}{2}. 
\end{equation}

Therefore, we conclude that $k \in \mathbb\{1,2\}$ implies $h(k)=t(k)$, whereas $h(k)-t(k) \geq 3^{k-3} \geq 1$ holds for every $k \in \mathbb{N}-\{0,1,2\}$.
\end{proof}


\section{Conclusion} \label{sec:Conc}

Given the $k$-dimensional grid $G_k$, the clockwise-algorithm lets us easily draw different covering trails with $\frac{3^k-1}{2}$ lines, and all of them remain inside the ($3 \times 3 \times \cdots \times 3$) box. After the $(3^k-1)$-th link, it is possible to switch from the previously applied $C(k-1)$ to another known solution of the $3^{k-1}$-point problem, completing a new optimal trial that has a different endpoint (e.g., we can take the walk shown in Figure \ref{fig:Figure_7} and then apply $C(3)$ from Figure \ref{fig:Figure_9}).

Let $X_k \equiv (1,1,\ldots,1)$ be the central node of $G_k$ (see Definition \ref{def1} for the case $k=3$).
\noindent We conjecture that, for every positive integer $k$, the $3^k$-point problem is solvable (embracing also every outside the box optimal trail) starting from each node of $G_k-\{X_k\}$ with a covering trail of length $h(k)=\frac{3^k-1}{2}$, while it is not if we include $X_k$ as an endpoint of $C(k)$.

\makeatletter
\renewcommand{\@biblabel}[1]{[#1]\hfill}
\makeatother


\begin{thebibliography}{99}


\bibitem{1} M. Kihn, \textit{Outside the Box: The Inside Story}, FastCompany, 1995.

\bibitem{2} S. Loyd, \textit{Cyclopedia of $5000$ Puzzles}, The Lamb Publishing Company, p. 301, 1914.

\bibitem{3} J. M. Chein, R. W. Weisberg, N. L. Streeter, and S. Kwok, Working memory and insight in the nine-dot problem, \textit{Memory \& Cognition}, vol. 38, pp. 883–892, 2010.

\bibitem{4} C. T. Lung and R. L. Dominowski, Effects of strategy instructions and practice on nine-dot problem solving, \textit{Journal of Experimental Psychology: Learning, Memory, and Cognition}, vol. 11, no. 4, pp. 804–811, 1985.

\bibitem{5} S. Bereg, P. Bose, A. Dumitrescu, F. Hurtado, and P. Valtr, Traversing a set of points with a minimum number of turns, \textit{Discrete \& Computational Geometry}, vol. 41, no. 4, pp. 513–532, 2009.

\bibitem{6} M. J. Collins, Covering a set of points with a minimum number of turns, \textit{International Journal of Computational Geometry \& Applications}, vol. 14, no. 1-2, pp. 105–114, 2004.

\bibitem{7} E. Kranakis, D. Krizanc, and L. Meertens, Link length of rectilinear Hamiltonian tours in grids, \textit{Ars Combinatoria}, vol. 38, pp. 177–192, 1994.

\bibitem{8} M. Rip\`a and P. Remirez, \textit{The Nine Dots Puzzle extended to $n \times n \times \cdots \times n$ Points}, viXra, 2013. Available online at: \url{https://vixra.org/pdf/1307.0021v4.pdf}.

\bibitem{9} A. Aggarwal, D. Coppersmith, S. Khanna, R. Motwani, and B. Schieber, The angular-metric traveling salesman problem, \textit{SIAM Journal on Computing}, vol. 29, pp. 697–711, 1999.

\bibitem{10} C. Stein and D. P. Wagner, Approximation algorithms for the minimum bends traveling salesman problem. In: K. Aardal, \& B. Gerards (Eds.), \textit{Integer Programming and Combinatorial Optimization}, LNCS, vol. 2081, pp. 406–421, 2001.

\bibitem{11} B. Chitturi and J. Pai, \textit{Minimum-Link Rectilinear Covering Tour is NP-hard in $R^4$}, arXiv, 2018. Available online at: \url{https://arxiv.org/abs/1810.00529}.

\bibitem{12} M. J. Collins and M. E. Moret, Improved lower bounds for the link length of rectilinear spanning paths in grids, \textit{Information Processing Letters}, vol. 68, no. 6, pp. 317–319, 1998.

\bibitem{13} M. Rip\`a, \textit{Solving the $n_1 \times n_2 \times n_3$ points problem for $n_3 < 6$}, Optimization Online, 2022. Available online at: \url{https://optimization-online.org/2022/06/8958/}.

\bibitem{14} B. Keszegh, \textit{Covering Paths and Trees for Planar Grids}, arXiv, 2013. Available online at: \url{https://arxiv.org/abs/1311.0452}.

\bibitem{15} A. Dumitrescu and C. T\'oth, Covering Grids by Trees, \textit{$26$th Canadian Conference on Computational Geometry}, 2014.
 
\bibitem{16} TM. Hohenwarter, M. Borcherds, G. Ancsin, B. Bencze, M. Blossier, J. Elias, K. Frank, L. Gal, A. Hofstaetter, F. Jordan, B. Karacsony, Z. Konecny, Z. Kovacs, W. Kuellinger, E. Lettner, S. Lizelfelner, B. Parisse, C. Solyom-Gecse, and M. Tomaschko, \textit{GeoGebra – Dynamic Mathematics for Everyone – version 6.0.507.0-w}, International GeoGebra Institute, 16 Oct. 2018. Available online at: \url{https://www.geogebra.org}.


\end{thebibliography}
\end{document}